\newtheorem{thm}{Theorem}[section]
\newtheorem{lem}[thm]{Lemma}
\newtheorem{cor}[thm]{Corollary}
\theoremstyle{definition}
\newtheorem{defn}[thm]{Definition}
\theoremstyle{remark}
\newtheorem{remk}[thm]{Remark}
\newtheorem{remks}[thm]{Remarks}
\newtheorem{exm}[thm]{Example}
\newtheorem{exms}[thm]{Examples}
\newtheorem{notat}[thm]{Notation}
\numberwithin{equation}{section}
\newcommand{\remove}[1]{}
\newcommand{\rank}{{\rm rank}}
\newcommand{\Hom}{{\rm Hom}}
\newcommand{\Spec}{{\rm Spec \,}}
\newcommand{\Sup}{{\rm Sup \ }}
\newcommand{\Ass}{{\rm Ass}\,}
\newcommand{\hh}{\rm ht}
\newcommand{\Um}{\mbox{\rm Um\,}}
\newcommand{\ds}{{/\kern-3pt/}}
\renewcommand{\dim}{\text{\rm dim}}
\newcommand{\tuborg}{\left\{\begin{array}{ll}}
\newcommand{\sluttuborg}{\end{array}\right.}
\newcounter{elno}
\newcounter{elno-abc}   
\newcounter{elno-abc-prime}
\begin{document}

\title[Existence of unimodular element in a projective module]{Existence of unimodular element in a projective module over symbolic Rees algebras}
\author{Chandan Bhaumik and Husney Parvez Sarwar}


\address{(Chandan Bhaumik) Department  of Mathematics, Indian  Institute of Technology Kharagpur,  Kharagpur 721302, West Bengal, India}
\email{cbhaumik11math@gmail.com}

\address{(H.P. Sarwar) Department  of Mathematics, Indian  Institute of Technology Kharagpur,  Kharagpur 721302, West Bengal, India}
\email{parvez@maths.iitkgp.ac.in}
\email{mathparvez@gmail.com}


\keywords{projective modules, unimodular element, symbolic Rees algebra}

\subjclass[2020]{Primary 13C10; Secondary 19A13}

\maketitle

\begin{quote}\emph{Abstract.}  
 Let $A$ be a symbolic (or an extended symbolic) Rees algebra (need not be Noetherian) of dimension $d$. Let $P$ be a  finitely generated projective $A$-module of rank $\geq$  $d$. Then P has a unimodular element. This improves the classical result of Serre for the mentioned class of algebras.
\end{quote}
\setcounter{tocdepth}{1}

\section{Introduction}\label{sec:Intro}

Let $R$ be a commutative ring and $P$ a finitely generated projective $R$-module.
An element $p\in P$ is said to be a unimodular if there exists $\phi \in \Hom(P,R)$ such that $\phi(p)=1$, in other words, $P$
splits off a free summand of rank one, i.e. $P\cong Q \oplus R$ 
for some projective $R$-module $Q$. If $R$ is a Noetherian commutative ring of dimension $d$, then
a classical result of Serre \cite{Serre58} asserts that every finitely generated projective $R$-modules of rank $>d$ has a unimodular element. In fact, this is the best possible result in general as a common example which can be found easily in the literature is the tangent bundle over the real algebraic sphere. Therefore if the $\rank(P)\leq d$,  then the  question that $P$ has a unimodular element, is subtle. In the case $\rank(P)=\dim(R)$, where  $R$ is an affine algebra over an algebraically closed field, there is a well developed obstruction theory which is due to Murthy \cite[Theorem 3.7]{Mu94}. (For a proof of the hypotheses in \cite[Theorem 3.7]{Mu94}, see \cite[Corollary 1.5]{Kr19}).

Other than Murthy's works, there are various fascinating obstruction theory 
in the literature when $rank(P)\leq \dim(R)$.
For this, we recommend the reader to look at MathSciNet citations of Murthy's
paper \cite{Mu94}. We do not deal with such a fancy theory here as our paper is in the pursuit of discovering commutative rings where such an obstruction does not exist.

In this paper, we show that symbolic Ress algebra and extended symbolic Rees algebras (see Definition \ref{defn:sym}) are examples of commutative rings where such an obstruction does not exist. The importance of symbolic Rees algebra first comes from the counterexample of Hilbert's 14th problem. Rees \cite{Rees58} provided the first counterexample to Zariski's version of Hilbert's 14th problem by proving an ideal $J$ of $R$ whose symbolic Rees algebra $\mathcal{R}_{s}(J)$ is not finitely generated. 
We prove the following result about an existence of a unimodular element in a projective module over a symbolic Rees algebra or over an extended symbolic Rees algebra.
\begin{thm}(Theorem \ref{uesra})
Let $R$ be a commutative noetherian domain of dimension $d$ and $I$ an ideal of $R$. Let $A= \mathcal{R}_{s}(I)$ or $\mathcal{R}_{s}(I,x^{-1})$ (symbolic or extended symbolic Rees algebra)(see Definition \ref{defn:sym}) and $P$ a finitely generated projective $A$-module of rank $\geq d+1$. Then $P$ has a unimodular element, i.e.
 $P\cong Q\oplus A$ for some projective $A$-module $Q$.
  \label{ecsra}
\end{thm}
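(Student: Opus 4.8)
The proof splits into two movements: reducing the (possibly non-Noetherian) algebra $A$ to a Noetherian one by approximation, and then a splitting theorem over the approximating algebras in which the gain of one unit over Serre's bound is extracted from the grading. I describe the argument for $A=\mathcal R_s(I)$; the extended algebra $\mathcal R_s(I,x^{-1})$ is handled the same way, the only change being that homogeneous generators may sit in negative degrees (and $t^{-1}\in A$).

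\emph{Noetherian approximation.} Write $A=\bigcup_\lambda B_\lambda$ as the filtered union of its finitely generated graded $R$-subalgebras $B_\lambda=R[a_1t^{n_1},\dots,a_kt^{n_k}]$, $a_j\in I^{(n_j)}$. Each $B_\lambda$ is a finitely generated algebra over the Noetherian domain $R$, hence Noetherian, and a domain contained in $R[t]$; the dimension inequality for finitely generated algebras over a Noetherian domain (the ``dimension formula'' in its inequality form) gives $\dim B_\lambda\le\dim R+\operatorname{tr.deg}_{\operatorname{Frac}R}\operatorname{Frac}(R[t])=d+1$. Since $P$ is finitely presented and $A=\operatorname{colim}_\lambda B_\lambda$ is filtered, standard limit arguments yield an index $\lambda$ and a finitely generated projective $B_\lambda$-module $P_\lambda$ with $P_\lambda\otimes_{B_\lambda}A\cong P$; as $B_\lambda\hookrightarrow A$ is an inclusion of domains, $P_\lambda$ has constant rank $r\ge d+1$. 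A splitting $\phi\colon P_\lambda\to B_\lambda$ with $\phi(p)=1$ base-changes to one for $P$, so it suffices to split off a free rank-one summand from $P_\lambda$ over $B_\lambda$.

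\emph{Splitting over the Noetherian graded ring $B:=B_\lambda$.} Now $B=\bigoplus_{n\ge0}B_n$ is a Noetherian graded domain with $B_0=R$, $\dim R=d$, $\dim B\le d+1$, and $\operatorname{rank}P_\lambda\ge d+1$. If $\dim B\le d$, Serre's splitting theorem applies directly. The essential case is $\dim B=d+1$, where the rank equals $\dim B$ and Serre is short by one: this is the analogue of the Plumstead--Bhatwadekar--Roy improvement for polynomial extensions, and the grading is what makes it work. I would run the Eisenbud--Evans--Plumstead basic-element machinery relative to the structure map $\operatorname{Spec}B\to\operatorname{Spec}R$ induced by $R=B_0\hookrightarrow B$: over the closed set $V(B_+)=\operatorname{Spec}R$ one works in dimension $d$, and on the complement the $\mathbb G_m$-action exhibits $\operatorname{Spec}B\setminus V(B_+)$ as a $\mathbb G_m$-bundle over $\operatorname{Proj}B$ (of dimension $\le d$), where $P_\lambda$ is locally a module over a Laurent-polynomial-type extension and the Plumstead--Bhatwadekar--Roy estimates again cap the effective dimension at $d$. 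Patching basic elements across the two regions then gives a unimodular element once the rank exceeds $d$. (In the extended case this is cleaner: $t^{-1}$ is a nonzerodivisor with $B[1/t^{-1}]=R[t,t^{-1}]$ of Serre dimension $\le d$, and $B/t^{-1}B$ has dimension $\le d$, so one patches along $t^{-1}$.)

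\emph{Where the difficulty lies.} The hard part is this second movement, concretely the construction of a generalized dimension function on $\operatorname{Spec}B$ bounded by $d$. The naive candidate $\mathfrak q\mapsto\dim\bigl(R/(\mathfrak q\cap R)\bigr)$ is everywhere $\le d$ but is \emph{not} a generalized dimension function, because the fibres of $\operatorname{Spec}B\to\operatorname{Spec}R$ over primes $\mathfrak p\supseteq I$ can be positive-dimensional (their dimension is a fibre-cone dimension, bounded only by $\operatorname{ht}\mathfrak p$), so a chain of primes in $\operatorname{Spec}B$ may remain at constant value for several steps. Controlling this degeneration by means of the grading (equivalently, via the retraction $B\twoheadrightarrow R$ with special fibre $B/B_+=R$ together with the $\mathbb G_m$-action on the complement), so that the dimension governing basic elements drops back to $d$, is the technical core; granting it, the ascent from $B_\lambda$ back to $A$ is purely formal.
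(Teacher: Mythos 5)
Your first movement (Noetherian approximation) is fine as far as it goes: $A$ is indeed the filtered union of its finitely generated graded $R$-subalgebras $B_\lambda\subseteq R[x]$, a finitely generated projective $A$-module descends to some $B_\lambda$, $\dim B_\lambda\le d+1$ by the dimension inequality, and a unimodular element over $B_\lambda$ base-changes to one over $A$. But this reduction only relocates the entire difficulty: what you now need is precisely the statement that over a Noetherian graded domain $B$ with $B_0=R$, $\dim B=d+1$, every projective module of rank $d+1$ (i.e.\ rank $=\dim B$) has a unimodular element, and your second movement does not prove it. The appeal to ``Eisenbud--Evans--Plumstead machinery relative to $\Spec B\to\Spec R$'', the $\mathbb{G}_m$-bundle over $\Proj B$, and ``Plumstead--Bhatwadekar--Roy estimates'' is a description of what a proof would have to accomplish, not a proof; you say yourself that the construction of a generalized dimension function bounded by $d$ (equivalently, the patching across $V(B_+)$ and its complement) is ``the technical core'' and proceed only ``granting it''. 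Nor can this be outsourced to the literature: the algebras $B_\lambda=R[a_1x^{n_1},\dots,a_kx^{n_k}]$ are arbitrary graded subalgebras of $R[x]$ containing $R$, not Rees algebras of an ideal, and the results of \cite{RS19} (and of Plumstead, Mandal, Bhatwadekar--Lindel--Rao) do not cover this class. So the proposal has a genuine gap exactly at the theorem's heart.

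It is worth noting that the paper's route makes your approximation step unnecessary and sidesteps the missing splitting theorem altogether. There one chooses $t$ with $P_t$ free (possible since $T^{-1}A=T^{-1}R[x]$ is a PID, $T=R\setminus\{0\}$), sets $b=at$ with $0\ne a\in I$ a non-unit, and patches over the Cartesian square formed by $A_b\cong R_b[x]$ and $A_{1+bA}$: over $A_b$ the module is free; over $A_{1+bA}$ a unimodular element is produced by going modulo $bA$ (dimension $\le d$) and invoking Heitmann's non-Noetherian analogue of Serre's theorem \cite{Heit84} --- this is how the possible non-Noetherianness of $A$ is handled, rather than by limit arguments; and the two unimodular elements are matched over $A_{b(1+bA)}=D^{-1}R'[x]$, $R'=R_{b(1+bR)}$ of dimension $d-1$, using Rao's transitivity theorem \cite[Theorem 5.1(I)]{Rao82} (rank $\ge d+1=(d-1)+2$ suffices) together with Roy's splitting of elementary automorphisms \cite{Roy82}. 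If you want to salvage your outline, you would have to supply, in full, the generalized-dimension-function or patching argument for the graded subalgebras $B_\lambda$; as written, that step is exactly what is missing.
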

 The similar results are obtained for polynomial rings by Plumstead \cite{Plum83}, for Laurent polynomial rings by Mandal \cite{Man82} and for overring of polynomial rings by Rao \cite{Rao82}. For Rees algebras, the analogous
conjectures are considered in \cite{RS19}. The above results are generalized to polynomial and Laurent polynomial rings for several variables by Bhatwadekar--Roy \cite{BR84} and Bhatwadekar--Lindel--Rao \cite{BLR85}. For monoid extension, existence of unimodular problem is considered by
Sarwar \cite{Sarwar16} \cite{Sarwar21},
Keshari--Sarwar \cite{KeSa17}, and Keshari--Mathew \cite{KeMa22}.

Let $R$ be a commutative Noetherian domain of Krull dimension $d$. Then the Rees algebras over $R$ are Noetherian but symbolic Rees algebras need not be Noetherian (for example, see \cite{Rob85}). However we can prove the Theorem \ref{ecsra} without using the assumption that symbolic Rees algebras are Noetherian. This is possible because of Heitmann's result \cite{Heit84} over non-Noetherian rings. For an extension of Heitmann's result, see the results of Gupta \cite{Gup16}. Our method is to use standard sheaf patching techniques. More precisely, we find two covers, then we prove the results separately on each cover, finally we patch/glue to get the results over the original space. A similar technique is used in \cite{RS19}. However there to prove the result in one of the two covers, they have used Plumstead's generalized dimension function. Here we prove the similar result using Heitmann's result as symbolic Rees algebras need not be Noetherian. In section $2$, we have recalled some basic definitions and results which will be used throughout the paper, also we have studied the Krull dimension of (extended) symbolic Rees algebras via valuation dimension. Theorem \ref{uesra} is proved in section $3$.

\section{Recollection of basic definitions and results}\label{sec:Prelim}

\subsection{Notation.} Throughout the paper, we assume that the rings are commutative with the unity and the modules are finitely generated. Also, we assume that projective module have the constant rank function.\par Now we recall some basic notions. For a ring $R$, let $\dim R$ denote the Krull dimension of $R$. Let $\Ass(R)$ denote the set of associated prime ideals of $R$, and $\hh (I)$ denote the height of the ideal $I$. 

\begin{defn}[Unimodular Element]
Let $R$ be a ring $R$ and $M$ an $R$-module. For $m \in M$, we define an ideal $O_{M}(m)=\{\varphi(m) : \varphi \in M^{*}=\Hom_{R}(M,R)\}$ of $R$ which is called the order ideal. The element $m$ in $M$ is called a unimodular element if $O_{M}(m)=R$, i.e. there exists a surjective $R$-linear map $\varphi \in M^{*}$ such that $\varphi(m)=1$. 
Let $\Um(M)$ denote the set of all unimodular elements in $M$.
\end{defn}

\textbf{Valuation dimension.} The notion of the valuation dimension was introduced by Jaffard \cite{Jaf60}. Let $A$ be an integral domain, an overring $B$ of $A$ is a subring of the field of fractions $K$ of $A$ that contains $A$, i.e. $A\subseteq B \subseteq K$. \par 
Valuation overring of $A$ means an overring of $A$ which is a valuation ring.
\begin{defn}[Valuation Dimension]
Let $R$ be an integral domain, the valuation dimension of $R$ is the supremum of dimensions of the valuation overring of $R$. Valuation dimension of $R$ is denoted by $\dim_{v}R$, i.e. \[\dim_{v}R=\Sup\{\dim V : V \ \text{is a valuation overring of} \ R\}.\]
\end{defn}
\begin{thm}\cite[Theorem 0.1]{ABDFK88}
Let $R$ be an integral domain which is not a field, $K$ is a field of fraction of $R$. Let $F$ denote the algebraic extension of $K$, and $d$ a positive integer. Then we say, $R$ have finite valuation dimension $d$ (write $\dim_{v}R=d$) if the following equivalent conditions are satisfied: \begin{enumerate}
    \item Each valuation overring of $R$ in $F$ has dimension $\leq d$ and there exists a valuation overring of $R$ in $F$ of dimension $d$.
    \item Each overring of $R$ in $F$ has dimension $\leq d$ and there exists a overring of $R$ in $F$ of dimension $d$.
    \item $\dim R[x_{1},\ldots,x_{d}]=2d$.
    \item $\dim R[x_{1},\ldots,x_{n}]=n+d$ if $n \geq d-1$.
\end{enumerate}
We say that $\dim_{v}R=\infty$ if there exists no $d$ satisfying the conditions $1$ to $4$. For the sake of completeness, each field is assigned valuation dimension $0$.
\end{thm}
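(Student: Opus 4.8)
The statement is the classical characterization of Jaffard's valuative dimension, so the plan is to anchor everything on the invariant $\dim_{v}R=\Sup\{\dim V : V \text{ a valuation overring of } R \text{ in } F\}$ and to run the cycle $(1)\Leftrightarrow(2)$, $(1)\Leftrightarrow(4)$, $(4)\Rightarrow(3)\Rightarrow(1)$. The two structural inputs I would isolate at the start are: the inequality $\dim B\leq\dim_{v}B$ valid for every domain $B$, and the behaviour of (valuative) dimension under adjunction of indeterminates, namely $\dim_{v}R[x_{1},\ldots,x_{n}]=\dim_{v}R+n$ together with Jaffard's bound $\dim R[x_{1},\ldots,x_{n}]\leq\dim_{v}R+n$.

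For $(1)\Leftrightarrow(2)$ I would compare the two suprema directly. One direction is immediate, since valuation overrings of $R$ in $F$ form a subclass of all overrings of $R$ in $F$; hence $(2)$ forces the valuation-overring supremum to be at most $d$ and, a valuation overring being in particular an overring, furnishes the required overring of dimension $d$. For the reverse I would use the lemma $\dim B\leq\dim_{v}B$: given a chain $\mathfrak{p}_{0}\subsetneq\cdots\subsetneq\mathfrak{p}_{n}$ in an overring $B\subseteq F$, pass to a saturated refinement, dominate each one-dimensional subquotient $B_{\mathfrak{p}_{i}}/\mathfrak{p}_{i-1}B_{\mathfrak{p}_{i}}$ by a valuation ring, and assemble these through the composite (pullback) valuation construction into a single valuation overring of dimension $\geq n$. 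Since any valuation overring of $B$ inside $F$ is again a valuation overring of $R$ inside $F$, this gives $\dim B\leq\dim_{v}B\leq\dim_{v}R$, so that the overring supremum does not exceed the valuation-overring supremum; the two are therefore equal.

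The passage to the polynomial conditions is where I would invest the real work. Assuming $(1)$, i.e. $\dim_{v}R=d$, Jaffard's bound gives $\dim R[x_{1},\ldots,x_{n}]\leq n+d$ for every $n$, and the content of $(3)$ and $(4)$ is the attainment of this bound. Here I would invoke the stabilization theorem for the sequence $a_{n}:=\dim R[x_{1},\ldots,x_{n}]-n$: it is nondecreasing, bounded above by $\dim_{v}R$, and equal to $\dim_{v}R$ for all $n\geq d-1$; this is exactly $(4)$, and specializing to $n=d$ yields $(3)$, $\dim R[x_{1},\ldots,x_{d}]=2d$. Conversely, since $\sup_{n}a_{n}=\lim_{n\to\infty}a_{n}=\dim_{v}R$, the hypothesis in $(4)$ that $a_{n}=d$ for all $n\geq d-1$ forces $\dim_{v}R=d$, giving $(1)$; and the single-value hypothesis $(3)$ returns $(1)$ by combining $\dim_{v}R\geq a_{d}=d$ with the rigidity at the threshold $n=d$ supplied by the same stabilization result. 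The role of the algebraic extension $F$ throughout is to ensure that admitting valuation overrings in algebraic extensions does not enlarge the supremum, which is precisely the hypothesis under which the valuative polynomial formula behaves correctly.

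The main obstacle is the polynomial half. Proving the exact valuative formula $\dim_{v}R[x_{1},\ldots,x_{n}]=\dim_{v}R+n$ and, above all, locating the stabilization threshold $n\geq d-1$ at which Jaffard's upper bound is attained requires the full Jaffard--Arnold analysis of ``special'' prime chains in polynomial rings over a domain of prescribed valuative dimension, sharpening Seidenberg's inequalities $\dim R+n\leq\dim R[x_{1},\ldots,x_{n}]\leq 2\dim R+n$. This two-sided control, needed to make the single numerical value in $(3)$ equivalent to the full invariant, is considerably more delicate than the elementary comparison of suprema in $(1)\Leftrightarrow(2)$.
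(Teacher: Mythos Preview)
The paper does not prove this statement: it is quoted verbatim as \cite[Theorem 0.1]{ABDFK88} in the preliminaries section and no argument is given. There is therefore no ``paper's own proof'' to compare your proposal against; the authors simply import the result as background for computing $\dim_{v}\mathcal{R}_{s}(I)$ in Lemmas~\ref{dimsra} and~\ref{dimesra}.

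As for the proposal itself, your outline is the standard route taken in the Jaffard--Arnold--ABDFK literature: the equivalence $(1)\Leftrightarrow(2)$ via the inequality $\dim B\leq\dim_{v}B$ and the composite-valuation construction is correct, and you rightly identify that the real content sits in the polynomial stabilization $\dim R[x_{1},\ldots,x_{n}]=n+\dim_{v}R$ for $n\geq\dim_{v}R-1$. Your sketch defers exactly this point to ``the full Jaffard--Arnold analysis,'' which is honest but means the proposal is a roadmap rather than a proof; the delicate step---showing that $a_{d}=d$ alone forces $\dim_{v}R\leq d$ (the direction $(3)\Rightarrow(1)$)---is asserted rather than argued. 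If you were to write this up in full you would need to supply Arnold's theorem on special chains or an equivalent device, since Seidenberg's inequalities by themselves do not pin down the threshold.
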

Clearly $\dim R\leq\dim_{v}R$ for every domain $R$ and if $B$ is an overring of $A$, then $\dim_{v}B\leq\dim_{v}A$.
\smallskip

\subsection{Recollection of Rees algebra and Symbolic Rees Algebras.}
 Let $R$ be a commutative ring and $I$ an ideal of $R$. The Rees algebra (also known as blow-up algebra) of $R$ with respect to $I$ as a subring of $R[x]$ define to be \[
    R[Ix] =\{\sum_{i=0}^{n} a_{i}x^{i} : a_{i} \in I^{i}\}= \bigoplus_{n\geq0} \ I^{n}x^{n} \subseteq R[x].
 \]
The extended Rees algebra of $R$ with respect to $I$ as a subring of $R[x,x^{-1}]$ define to be  \[
    R[Ix,x^{-1}] = \{\sum_{i=-n}^{n} a_{i}x^{i} : a_{i} \in I^{i}\} = \bigoplus_{n\in \mathbb{Z}} \ I^{n}x^{n} \subseteq R[x,x^{-1}].
 \] 
where for $n\leq 0$, $I^{n}=R$ by convention.\par
The following theorem is for the Krull dimension of Rees algebra and extended Rees algebra (see \cite[Theorem 5.1.4]{SH06}).
\begin{thm}
Let $R$ be a Noetherian ring and $I$ an ideal of $R$. If $\dim R$ is finite, then\begin{enumerate}
    \item $\dim R[Ix]$ = $\dim R+1$, if $I\nsubseteq \mathfrak{p}$ for some $\mathfrak{p}\in \Spec R$ with $\dim (R/\mathfrak{p}) = \dim R$ and $\dim R[Ix] = \dim R$, otherwise.
    \item $\dim R[Ix,x^{-1}] = \dim R+1$.
\end{enumerate}
\end{thm}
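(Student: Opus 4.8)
The plan is to prove item (2) first, because the extended Rees algebra $S:=R[Ix,x^{-1}]$ carries a distinguished homogeneous nonzerodivisor $x^{-1}$, and then to deduce item (1) for $\mathcal{R}:=R[Ix]$ by comparing it with $S$. Three preliminary facts about $S$ control everything. Inverting $x^{-1}$ makes $x=(x^{-1})^{-1}$ available and $a=(ax)x^{-1}$ for $a\in I$, so one checks $S_{x^{-1}}=R[x,x^{-1}]$, which is Noetherian of dimension $d+1$ by the classical Laurent-polynomial formula. In the quotient, $(x^{-1})S$ has degree-$m$ component $I^{m+1}x^m$, whence $S/(x^{-1})S=\bigoplus_{m\ge 0}I^m/I^{m+1}=\mathrm{gr}_I(R)$, whose dimension is $\le d$ by the standard fact that $\dim\mathrm{gr}_I(R)=\dim R$ for a proper ideal. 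Finally $x^{-1}$ is a nonzerodivisor on $S\subseteq R[x,x^{-1}]$, since it acts on the free $R$-module $R[x,x^{-1}]$ by the injective shift $x^n\mapsto x^{n-1}$.

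With these in hand I would pin down $\dim S$ by a height count. The lower bound is immediate: $\dim S\ge\dim S_{x^{-1}}=\dim R[x,x^{-1}]=d+1$, as localization does not raise dimension. For the upper bound I bound $\hh(\mathfrak q)$ for every prime $\mathfrak q\subset S$. If $x^{-1}\notin\mathfrak q$ then $\mathfrak q$ survives in $S_{x^{-1}}$ and $\hh(\mathfrak q)=\hh(\mathfrak q S_{x^{-1}})\le\dim S_{x^{-1}}=d+1$. If $x^{-1}\in\mathfrak q$ then in the Noetherian local ring $S_{\mathfrak q}$ the image of $x^{-1}$ is a nonzerodivisor lying in the maximal ideal, so the principal-ideal-theorem dimension drop gives $\hh(\mathfrak q)=\dim S_{\mathfrak q}=\dim S_{\mathfrak q}/(x^{-1})+1=\hh_{\mathrm{gr}_I(R)}(\overline{\mathfrak q})+1\le\dim\mathrm{gr}_I(R)+1\le d+1$. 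Hence $\dim S\le d+1$, proving (2). The point that makes this clean — and that a naive ``split the chain at $x^{-1}$'' argument misses — is that the regular element $x^{-1}$ contributes exactly $+1$ to height, so the base and fibre directions are never double counted.

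For (1) I would use that on the locus where the irrelevant ideal $\mathcal{R}_+=\bigoplus_{n\ge 1}I^n x^n$ is invertible the two algebras coincide: inverting a degree-one element $ax$ (with $a\in I$) produces $x^{-1}=a/(ax)$, so $\mathcal{R}_{ax}=S_{ax}$. Thus any prime of $\mathcal{R}$ \emph{not} containing $\mathcal{R}_+$ has height $\le\dim S=d+1$. The remaining primes lie in $V(\mathcal{R}_+)\cong\Spec R$, and there the sharp bound is the real content. I would invoke the standard formula $\dim\mathcal{R}=\max\bigl(\dim R,\,1+\dim\Proj\mathcal{R}\bigr)$ for a graded ring generated in degree one over $\mathcal{R}_0=R$, together with the fact that $\Proj\mathcal{R}$ is the blow-up of $\Spec R$ along $V(I)$, hence an isomorphism over $\Spec R\setminus V(I)$; therefore $\dim\Proj\mathcal{R}=\max\{\dim R/\mathfrak p:\mathfrak p\in\mathrm{Min}(R),\ I\not\subseteq\mathfrak p\}$. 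This yields exactly the stated dichotomy: $1+\dim\Proj\mathcal{R}=d+1$ precisely when some $\mathfrak p$ with $\dim R/\mathfrak p=d$ has $I\not\subseteq\mathfrak p$ — in which case reducing modulo $\mathfrak p$ gives the Rees algebra of the domain $R/\mathfrak p$ along the nonzero ideal $\bar I$, of dimension $d+1$ — while if every top-dimensional $\mathfrak p$ contains $I$ the extra blow-up direction collapses on every maximal component and $\dim\mathcal{R}=d$.

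The hard part is the upper bound for the ordinary Rees algebra at the primes containing $\mathcal{R}_+$: the fibre of $\Spec\mathcal{R}\to\Spec R$ over a point of $V(I)$ is the fibre cone, whose dimension equals the analytic spread and can be as large as $\hh(\mathfrak p)$, so the crude fibre-dimension inequality only delivers $\hh\le 2d$. Overcoming this slack is exactly where the birational geometry of the blow-up (equivalently, the regular-element argument transported from $S$) is needed, and I expect it to be the main obstacle; by contrast part (2) is essentially formal once $\dim\mathrm{gr}_I(R)\le d$ and $\dim R[x,x^{-1}]=d+1$ are granted.
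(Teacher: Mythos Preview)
The paper does not supply its own proof of this statement: it is quoted verbatim, with attribution, as \cite[Theorem 5.1.4]{SH06} and used only as background input to the later arguments. So there is nothing in the paper to compare your proposal against.

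That said, your outline is essentially the textbook route. For part (2), the triad $S_{x^{-1}}=R[x,x^{-1}]$, $S/(x^{-1})S\cong\mathrm{gr}_I(R)$, and $x^{-1}$ a nonzerodivisor, together with the Krull principal ideal theorem, is exactly how Swanson--Huneke organise the argument. One caution: you invoke $\dim\mathrm{gr}_I(R)\le\dim R$ as a ``standard fact,'' but in several references (including \cite{SH06}) that inequality is \emph{deduced from} the very theorem you are proving, so be sure you can justify it independently---for example via the graded structure of $\mathrm{gr}_I(R)$ over $R/I$ and reduction to the local case. For part (1), your passage through $\Proj\mathcal{R}$ and the blow-up description is correct and again matches the standard treatment, though it imports somewhat more machinery than the bare statement requires; the Swanson--Huneke proof works more directly with chains of (graded) primes in $R[Ix]$ lying over primes of $R$. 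Your final paragraph correctly isolates the only genuinely delicate point: controlling heights at primes containing $\mathcal{R}_+$.
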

\begin{cor}
Let $R$ be a Noetherian domain of dimension $d$ and $(0)\neq I$ an ideal of $R$. Then $\dim_{v}R[Ix]=d+1$.\label{vdra}
\end{cor}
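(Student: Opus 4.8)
The plan is to deduce this from the Krull-dimension formula for Rees algebras together with condition~(4) of \cite[Theorem~0.1]{ABDFK88}: the point is simply that $R[Ix]$ is again a Noetherian ring, so its valuation dimension equals its Krull dimension, and the latter is $d+1$.

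First I would compute $\dim R[Ix]$. Since $R$ is a domain, the zero ideal is prime and $\dim(R/(0))=\dim R=d$, while the hypothesis $(0)\neq I$ says precisely that $I\not\subseteq(0)$. Hence the first alternative in \cite[Theorem~5.1.4]{SH06} applies and yields $\dim R[Ix]=\dim R+1=d+1$. In particular $R[Ix]$ has finite Krull dimension $\geq 1$, so it is an integral domain (being a subring of $R[x]$) that is not a field.

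Next I would observe that $R[Ix]$ is Noetherian: since $R$ is Noetherian the ideal $I=(a_{1},\dots,a_{k})$ is finitely generated, so $R[Ix]=R[a_{1}x,\dots,a_{k}x]$ is a finitely generated $R$-algebra and hence Noetherian by the Hilbert basis theorem. For any Noetherian ring $S$ of finite Krull dimension one has $\dim S[y_{1},\dots,y_{n}]=n+\dim S$ for every $n\geq 0$, by iterating $\dim S[y]=\dim S+1$. Applying this with $S=R[Ix]$, condition~(4) of \cite[Theorem~0.1]{ABDFK88} is satisfied with the integer $\dim R[Ix]=d+1$ playing the role of $d$; therefore $\dim_{v}R[Ix]=d+1$.

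I do not expect a genuine obstacle here; the only points needing care are that the hypotheses of the two cited theorems really hold --- that $I\neq(0)$ inside the domain $R$ is exactly what forces the ``$+1$'' in the dimension of $R[Ix]$, and that finite generation of $I$ (automatic from Noetherianity of $R$) is what keeps $R[Ix]$ Noetherian, so that its Krull and valuation dimensions agree. Alternatively one could argue directly with valuation overrings $V$ of $R[Ix]$: such a $V$ lies in the fraction field $K(x)$, so either $x\in V$ or $x^{-1}\in V$, and in the two cases $V$ is a valuation overring of $R[x]$, respectively of $R[x^{-1}]\cong R[t]$, forcing $\dim V\leq d+1$, while $\dim_{v}R[Ix]\geq\dim R[Ix]=d+1$ gives the reverse bound.
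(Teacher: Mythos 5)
Your proposal is correct and follows exactly the route the paper intends (the corollary is stated without proof right after the Krull-dimension theorem for Rees algebras): $\dim R[Ix]=d+1$ by that theorem since $(0)$ is prime in the domain $R$ and $I\neq(0)$, and $R[Ix]$ is a finitely generated algebra over the Noetherian ring $R$, hence Noetherian, so condition (4) of the cited theorem of Anderson et al.\ gives $\dim_{v}R[Ix]=\dim R[Ix]=d+1$. Your closing direct argument with valuation overrings in $K(x)$ is a sound alternative but not needed.
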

For further properties of Rees algebras $R[Ix]$ and extended Rees algebras $R[Ix,x^{-1}]$, one can see \cite{SH06}.
\begin{defn}\label{defn:sym}
[Symbolic Power, Symbolic Rees Algebra]
Let $R$ be a commutative Noetherian ring and $I$ an ideal of $R$. The $n$-th symbolic power of $I$ is an ideal of $R$ defined by \[I^{(n)}=\bigcap\limits_{\mathfrak{p}\in \Ass(R/I)}(I^{n}R_{\mathfrak{p}}\cap R).\]
\end{defn}
In general, given an ideal $I$ of $R$ the ordinary power $I^{n}$ do not coincide with the symbolic power $I^{(n)}$. In particular, if $I=\mathfrak{p}\in \Spec(R)$, then $\mathfrak{p}^{(n)}=\mathfrak{p}^{n}R_{\mathfrak{p}}\cap R$ and if $I$ is a maximal ideal $\mathfrak{m}$, then $\mathfrak{m}^{(n)}=\mathfrak{m}^{n}$. Clearly from the definition $I^{(1)}=I$, $I^{n}\subseteq I^{(n)}$ and $I^{(n+1)}\subseteq I^{(n)}$ for all $n\geq 1$.\par
Let $R$ be a commutative noetherian ring and $I$ an ideal of $R$. The symbolic Rees algebra of $R$ with respect to $I$ is a graded algebra defined as
\[
    \mathcal{R}_{s}(I)= \{\sum_{i=0}^{n} a_{i}x^{i} : a_{i} \in I^{(i)}\}= \bigoplus_{n\geq0} \ I^{(n)}x^{n} \subseteq R[x].
\]
Sometimes it is also known as symbolic blow-up algebra. In particular, for all $n\geq 2$ if $I^{(n)}=I^{n}$, then the symbolic Rees algebra $\mathcal{R}_{s}(I)$ coincide with the Rees algebra $R[Ix]$. \par

One can define the extended symbolic Rees algebra of an ideal $I$ of $R$ as an extension of $\mathcal{R}_{s}(I)$. Let us denote $\mathcal{R}_{s}(I,x^{-1})$ as extended symbolic Rees algebra define to be \[\mathcal{R}_{s}(I,x^{-1})= \{\sum_{i=-n}^{n} a_{i}x^{i} : a_{i} \in I^{(i)}\} = \bigoplus_{n\in \mathbb{Z}} \ I^{(n)}x^{n} \subseteq R[x,x^{-1}].\]
where for $n\leq 0$, $I^{(n)}=R$ by convention. \par
For further properties of $\mathcal{R}_{s}(I)$ and $\mathcal{R}_{s}(I,x^{-1})$, one can see \cite{BV88}, \cite{SH06}.
\begin{lem}
Let $R$ be a commutative Noetherian domain of dimension $d$. Then \[\dim \mathcal{R}_{s}(I)=\begin{cases}
d, & \text{if} \ I=(0); \\
d+1, & \text{otherwise}.
\end{cases}\]\label{dimsra}
\end{lem}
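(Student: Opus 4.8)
The plan is to compute $\dim \mathcal{R}_{s}(I)$ by sandwiching it between two Rees-type algebras whose dimension is already understood, and then invoking the valuation-dimension machinery recalled above. First, observe that if $I = (0)$ then $\mathcal{R}_{s}(I) = R$ in degree zero (all higher symbolic powers of the zero ideal are zero), so $\dim \mathcal{R}_{s}(I) = \dim R = d$; this disposes of the first case immediately. Henceforth assume $I \neq (0)$.

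For the main case, I would exhibit the containments of graded $R$-subalgebras of $R[x]$
\[
R[Ix] \ \subseteq \ \mathcal{R}_{s}(I) \ \subseteq \ R[x],
\]
which hold because $I^{n} \subseteq I^{(n)} \subseteq R$ for every $n \geq 1$. Since $R$ is a domain, all three rings are domains with the same fraction field is \emph{not} quite true — but each is an overring of the previous one inside $\mathrm{Frac}(R[x]) = K(x)$. Now apply the inequality $\dim_{v} B \leq \dim_{v} A$ for an overring $B \supseteq A$ (stated just after the valuation-dimension theorem) to the chain above: this gives
\[
\dim_{v} R[x] \ \leq \ \dim_{v} \mathcal{R}_{s}(I) \ \leq \ \dim_{v} R[Ix].
\]
By \corref{vdra}, $\dim_{v} R[Ix] = d+1$. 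On the other hand $R[x] = R[Ix']$ for $I' = R$ (or simply: $R[x]$ is the Rees algebra of the unit ideal), so the same corollary — or directly part (4) of the valuation-dimension theorem with $n=1$, since $d \geq d-1$ forces $\dim R[x] = d+1$ once $\dim_{v}R = d$ — gives $\dim_{v} R[x] = d+1$ as well. Hence $\dim_{v} \mathcal{R}_{s}(I) = d+1$, and since $\dim \leq \dim_{v}$ always, we get $\dim \mathcal{R}_{s}(I) \leq d+1$.

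For the reverse inequality $\dim \mathcal{R}_{s}(I) \geq d+1$, I would produce an explicit chain of $d+2$ primes. Take a maximal chain $\mathfrak{p}_{0} \subsetneq \cdots \subsetneq \mathfrak{p}_{d}$ in $\Spec R$; the extended ideals generate a chain in $\mathcal{R}_{s}(I)$ via the inclusion $R \hookrightarrow \mathcal{R}_{s}(I)$ and the fact that $\mathcal{R}_{s}(I)$ is a graded $R$-algebra (so $R \to \mathcal{R}_{s}(I) \to \mathcal{R}_{s}(I)/\mathcal{R}_{s}(I)_{+} = R$ splits off the irrelevant ideal). This yields a chain of length $d$ ending at the prime $\mathfrak{q} = \mathfrak{m}\,\mathcal{R}_{s}(I) + \mathcal{R}_{s}(I)_{+}$ where $\mathfrak{m} = \mathfrak{p}_{d}$; it remains to extend by one more step, which is where the irrelevant ideal $\mathcal{R}_{s}(I)_{+} = \bigoplus_{n \geq 1} I^{(n)}x^{n}$ comes in — since $I \neq 0$, this ideal is nonzero, so $\mathcal{R}_{s}(I)_{+} + \mathfrak{p}_{d}\,\mathcal{R}_{s}(I)$ strictly contains $\mathfrak{p}_{d}\,\mathcal{R}_{s}(I)$, giving a chain of length $d+1$. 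Alternatively and more cleanly, one can pull back a length-$(d+1)$ chain from $R[Ix]$: a prime chain of length $d+1$ in the Noetherian ring $R[Ix]$ (which exists since $\dim R[Ix] = d+1$ by the dimension theorem for Rees algebras, using $I \neq 0$ so $I \not\subseteq \mathfrak{p}$ for the relevant $\mathfrak{p}$ — here one should choose $\mathfrak{p}$ with $I \not\subseteq \mathfrak{p}$, which is possible as $I \neq 0$ in the domain $R$... more care is needed, so the irrelevant-ideal argument is safer) lies over a chain in $\mathcal{R}_{s}(I)$ under the integral-type inclusion, but integrality is not available here, so I prefer the direct construction.

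The main obstacle I anticipate is the lower bound, specifically handling the non-Noetherian nature of $\mathcal{R}_{s}(I)$: one cannot freely invoke going-up or integrality, so the length-$(d+1)$ chain must be written down by hand as above, and one must verify that the contraction $\mathfrak{q} \cap R$ of the top prime recovers $\mathfrak{p}_{d}$ and that $\mathcal{R}_{s}(I)_{+}$ really does sit strictly above $\mathfrak{p}_{d}\,\mathcal{R}_{s}(I)$ — this uses that $R$ is a domain so that $I^{(1)} = I$ contains a nonzero element not killed modulo $\mathfrak{p}_{d}\,\mathcal{R}_{s}(I)$, which needs a brief argument comparing the degree-zero and degree-one pieces. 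The upper bound, by contrast, is essentially formal once the overring chain $R[Ix] \subseteq \mathcal{R}_{s}(I) \subseteq R[x]$ and \corref{vdra} are in hand.
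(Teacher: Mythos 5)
Your upper bound is correct and is essentially the paper's own argument: sandwich $R[Ix]\subseteq \mathcal{R}_{s}(I)\subseteq R[x]$, apply the overring inequality for valuation dimension together with Corollary \ref{vdra}, and conclude $\dim \mathcal{R}_{s}(I)\leq \dim_{v}\mathcal{R}_{s}(I)=d+1$. (One small slip there: your appeal to part (4) of the valuation-dimension theorem with $n=1$ requires $1\geq d-1$, not ``$d\geq d-1$'', so it only covers $d\leq 2$; but your alternative justification --- $R[x]$ is the Rees algebra of the unit ideal, or simply $\dim_{v}R[x]\geq \dim R[x]=d+1$ while $R[x]$ is an overring of $R[Ix]$ --- is fine, and this is the same level of detail as the paper.)

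The lower bound as written has a genuine gap. The chain you can legitimately build from a maximal chain $\mathfrak{p}_{0}\subsetneq\cdots\subsetneq\mathfrak{p}_{d}$ in $\Spec R$ consists of the preimages $\mathfrak{q}_{i}=\mathfrak{p}_{i}\mathcal{R}_{s}(I)+\mathcal{R}_{s}(I)_{+}$ under the retraction $\mathcal{R}_{s}(I)\twoheadrightarrow \mathcal{R}_{s}(I)/\mathcal{R}_{s}(I)_{+}\cong R$; these are prime because they are preimages of primes. But your extra step --- inserting $\mathfrak{p}_{d}\mathcal{R}_{s}(I)$ on the strength of $\mathfrak{p}_{d}\mathcal{R}_{s}(I)\subsetneq \mathfrak{p}_{d}\mathcal{R}_{s}(I)+\mathcal{R}_{s}(I)_{+}$ --- does not produce a longer chain of \emph{primes}: $\mathfrak{p}_{d}\mathcal{R}_{s}(I)$ is an extended ideal and is not known to be prime (nothing in your sketch addresses this), and it is not even comparable with the member $\mathfrak{q}_{d-1}$ already lying below $\mathfrak{q}_{d}$, since it does not contain $\mathcal{R}_{s}(I)_{+}$ (that would give $I=\mathfrak{p}_{d}I$, hence $I=0$ in the domain $R$) and it is not contained in $\mathfrak{q}_{d-1}$ (degree zero would give $\mathfrak{p}_{d}\subseteq\mathfrak{p}_{d-1}$). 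The verification you flag at the end (``$\mathcal{R}_{s}(I)_{+}$ sits strictly above $\mathfrak{p}_{d}\mathcal{R}_{s}(I)$'') is likewise not a containment that holds, nor the one you need. The missing observation --- which is exactly the paper's device --- is that the extra prime goes at the \emph{bottom}: $\mathcal{R}_{s}(I)$ is a domain (it sits inside $R[x]$), and $\mathcal{R}_{s}(I)_{+}$ is a nonzero prime (kernel of the surjection onto the domain $R$, nonzero because $Ix\neq 0$), so $(0)\subsetneq\mathfrak{q}_{0}\subsetneq\cdots\subsetneq\mathfrak{q}_{d}$ is a chain of primes of length $d+1$; equivalently, $\dim\mathcal{R}_{s}(I)\geq \dim(\mathcal{R}_{s}(I)/\mathcal{R}_{s}(I)_{+})+\hh(\mathcal{R}_{s}(I)_{+})\geq d+1$. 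With that replacement your argument coincides with the paper's proof.
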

\begin{proof}
For $I=(0)$, there is nothing to prove. So we can assume $I\neq (0)$. By Corollary \ref{vdra}, $\dim_{v}R[Ix]=1+d=\dim R[Ix]$.
 Consider $R[Ix]\subseteq \mathcal{R}_{s}(I)\subseteq R[x]$. Then $\dim_{v} \mathcal{R}_{s}(I)\leq \dim_{v}R[Ix]=d+1 $ and $\dim_{v} \mathcal{R}_{s}(I)\geq \dim_{v}R[x]=d+1$. This implies $\dim_{v} \mathcal{R}_{s}(I)=d+1$. We know that $\dim \mathcal{R}_{s}(I)\leq\dim_{v} \mathcal{R}_{s}(I)$. Hence $\dim \mathcal{R}_{s}(I)\leq d+1$. \par 
For the other inequality, note that  $\mathcal{R}_{s}(I)=\mathcal{R}_{s}(I)_{0}\oplus\mathcal{R}_{s}(I)_{+} $, where $\mathcal{R}_{s}(I)_{0}=R$ and $\mathcal{R}_{s}(I)_{+}= Ix \oplus I^{(2)}x^{2}\oplus \cdots $, and $\mathcal{R}_{s}(I)/\mathcal{R}_{s}(I)_{+}\cong R$. Hence $\mathcal{R}_{s}(I)_{+}$ is a prime ideal of $\mathcal{R}_{s}(I)$ and $\hh(\mathcal{R}_{s}(I)_{+})>0$. Then we have  \[\dim(\mathcal{R}_{s}(I)/\mathcal{R}_{s}(I)_{+})+\hh(\mathcal{R}_{s}(I)_{+})\leq \dim\mathcal{R}_{s}(I).\] This implies that $\dim R+1\leq \dim\mathcal{R}_{s}(I)$. Therefore $\dim\mathcal{R}_{s}(I)=d+1$.
\end{proof}
\begin{lem}
Let $R$ be a Noetherian domain of dimension $d$. Then $\dim \mathcal{R}_{s}(I,x^{-1})=d+1.$\label{dimesra}
\end{lem}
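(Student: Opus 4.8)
The plan is to follow the scheme of the proof of Lemma~\ref{dimsra}: bound $\dim\mathcal{R}_{s}(I,x^{-1})$ from above by a valuation-dimension comparison, and from below by identifying a localization whose dimension is already known. Unlike in Lemma~\ref{dimsra}, no separate treatment of the case $I=(0)$ is needed, since the non-positive part of the $\Z$-grading always contributes one extra ``variable''.

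For the upper bound, consider the tower of subrings
\[
R[x^{-1}]\subseteq\mathcal{R}_{s}(I,x^{-1})\subseteq R[x,x^{-1}].
\]
As $R$ is a domain, all three rings have the same fraction field $K(x)$, where $K$ is the fraction field of $R$; hence $\mathcal{R}_{s}(I,x^{-1})$ is an overring of $R[x^{-1}]$. Since $R[x^{-1}]\cong R[x]$ is Noetherian of Krull dimension $d+1$, its valuation dimension is also $d+1$ (for a Noetherian ring the valuation dimension equals the Krull dimension, by the characterization of \cite{ABDFK88} recalled above together with the fact that $\dim R[x_{1},\ldots,x_{n}]=\dim R+n$ for Noetherian $R$). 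As an overring has valuation dimension no larger,
\[
\dim\mathcal{R}_{s}(I,x^{-1})\le\dim_{v}\mathcal{R}_{s}(I,x^{-1})\le\dim_{v}R[x^{-1}]=d+1.
\]

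For the lower bound, note that $x^{-1}$ lies in $\mathcal{R}_{s}(I,x^{-1})$, as it spans the degree $-1$ component $Rx^{-1}$. Once $x^{-1}$ is inverted, $x=(x^{-1})^{-1}$ becomes available, so the localization contains $R[x,x^{-1}]$; the reverse inclusion holds because $\mathcal{R}_{s}(I,x^{-1})\subseteq R[x,x^{-1}]$ already, so $\mathcal{R}_{s}(I,x^{-1})_{x^{-1}}=R[x,x^{-1}]$. Writing $R[x,x^{-1}]=R[Rx,x^{-1}]$ as the extended Rees algebra of the unit ideal, the dimension formula for extended Rees algebras quoted above gives $\dim R[x,x^{-1}]=\dim R+1=d+1$. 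Since Krull dimension does not increase under localization, $\dim\mathcal{R}_{s}(I,x^{-1})\ge\dim R[x,x^{-1}]=d+1$. Combining the two inequalities yields $\dim\mathcal{R}_{s}(I,x^{-1})=d+1$.

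I do not foresee a genuine obstacle here. The only steps requiring a little care are verifying that the localization $\mathcal{R}_{s}(I,x^{-1})_{x^{-1}}$ genuinely collapses to $R[x,x^{-1}]$, and invoking $\dim_{v}=\dim$ for the Noetherian ring $R[x^{-1}]$. An alternative upper bound runs through $\mathcal{R}_{s}(I)\subseteq\mathcal{R}_{s}(I,x^{-1})$ using $\dim_{v}\mathcal{R}_{s}(I)=d+1$ from the proof of Lemma~\ref{dimsra}, but this needs $I\neq(0)$ to guarantee that $\mathcal{R}_{s}(I)$ has fraction field $K(x)$, and so forces a case split; routing through $R[x^{-1}]$ avoids it.
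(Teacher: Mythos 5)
Your proposal is correct and follows essentially the same route as the paper: the upper bound via the sandwich $R[x^{-1}]\subseteq\mathcal{R}_{s}(I,x^{-1})\subseteq R[x,x^{-1}]$ and the overring inequality for valuation dimension, and the lower bound via the identification $\mathcal{R}_{s}(I,x^{-1})_{x^{-1}}=R[x,x^{-1}]$. The only cosmetic difference is that you justify $\dim_{v}R[x^{-1}]=d+1$ by the Noetherian (Jaffard) property rather than quoting \cite[Lemma 1.15]{ABDFK88} to equate $\dim_{v}R[x^{-1}]$ with $\dim_{v}R[x,x^{-1}]$, which is indeed not needed for the inequality.
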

\begin{proof}
Note that $\dim_{v}R[x^{-1}]=d+1$. Consider $R[x^{-1}]\subseteq \mathcal{R}_{s}(I,x^{-1})\subseteq R[x,x^{-1}].$ By \cite[Lemma 1.15]{ABDFK88}, $\dim_{v}R[x^{-1}]=\dim_{v}R[x,x^{-1}]$. Hence $\dim_{v} \mathcal{R}_{s}(I,x^{-1})=1+d$. Then we have $\dim \mathcal{R}_{s}(I,x^{-1})\leq\dim_{v} \mathcal{R}_{s}(I,x^{-1})=d+1$.
 \par
 For the other inequality, from $R[x^{-1}]\subseteq \mathcal{R}_{s}(I,x^{-1})\subseteq R[x,x^{-1}]$, we observe that  $\mathcal{R}_{s}(I,x^{-1})_{x^{-1}}= R[x,x^{-1}]$. Then we have \[\dim \mathcal{R}_{s}(I,x^{-1})\geq \dim \mathcal{R}_{s}(I,x^{-1})_{x^{-1}}=\dim R[x,x^{-1}].\]
 Hence $\dim \mathcal{R}_{s}(I,x^{-1})\geq d+1$. Therefore $\dim \mathcal{R}_{s}(I,x^{-1})=d+1$.
\end{proof}
If $R$ is a Noetherian ring, then we know that the Rees algebra is a finitely generated $R$-algebra. Unlike Rees algebra, the symbolic Rees algebra is not necessarily finitely generated $R$-algebra. For example, see \cite{Rob85}.
\section{Existence of a unimodular element}
\begin{lem}\label{local-commute}
Let $R$ be a Noetherian ring, $I$ an ideal of $R$, and  $T$ be a multiplicatively closed subset of $R$. Then $T^{-1}\mathcal{R}_{s}(I)=T^{-1}\mathcal{R}_{s}(T^{-1}I)$, where $T^{-1}\mathcal{R}_{s}(T^{-1}I)$ means the  symbolic Rees algebra of $T^{-1}R$ with respect to $T^{-1}I$.
\end{lem}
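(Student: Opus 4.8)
Since localization commutes with direct sums, $T^{-1}\mathcal{R}_{s}(I)=\bigoplus_{n\ge 0}T^{-1}(I^{(n)})\,x^{n}$ as graded $T^{-1}R$-algebras, while the symbolic Rees algebra of $T^{-1}R$ with respect to $T^{-1}I$ is, by \defref{defn:sym}, $\bigoplus_{n\ge 0}(T^{-1}I)^{(n)}\,x^{n}$. So the lemma is equivalent to the assertion that symbolic powers commute with localization,
\[
T^{-1}\bigl(I^{(n)}\bigr)=\bigl(T^{-1}I\bigr)^{(n)}\quad\text{in }T^{-1}R,\qquad n\ge 0,
\]
the case $n=0$ being trivial since $I^{(0)}=R$. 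Note that only the Noetherianity of $R$ (equivalently of $T^{-1}R$) will be used---through the finiteness of $\Ass$ and the good behaviour of colon ideals under localization---and not that $\mathcal{R}_{s}(I)$ is Noetherian.

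For this equality the plan is as follows. First unwind the left-hand side: since $\Ass(R/I)$ is finite, $I^{(n)}=\bigcap_{\mathfrak{p}\in\Ass(R/I)}(I^{n}R_{\mathfrak{p}}\cap R)$ is a finite intersection of submodules of $R$, and exactness of localization gives $T^{-1}(I^{(n)})=\bigcap_{\mathfrak{p}\in\Ass(R/I)}T^{-1}(I^{n}R_{\mathfrak{p}}\cap R)$. For the right-hand side, use the standard description $\Ass(T^{-1}R/T^{-1}I)=\{\mathfrak{p}T^{-1}R:\mathfrak{p}\in\Ass(R/I),\ \mathfrak{p}\cap T=\emptyset\}$ together with the identifications $(T^{-1}R)_{\mathfrak{p}T^{-1}R}\cong R_{\mathfrak{p}}$ and $(T^{-1}I)^{n}(T^{-1}R)_{\mathfrak{p}T^{-1}R}\cong I^{n}R_{\mathfrak{p}}$ for $\mathfrak{p}\in\Ass(R/I)$ disjoint from $T$; this yields $(T^{-1}I)^{(n)}=\bigcap_{\mathfrak{p}\in\Ass(R/I),\,\mathfrak{p}\cap T=\emptyset}(I^{n}R_{\mathfrak{p}}\cap T^{-1}R)$. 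For such a $\mathfrak{p}$, writing $I^{n}R_{\mathfrak{p}}\cap R=\ker(R\to R_{\mathfrak{p}}/I^{n}R_{\mathfrak{p}})$ and noting that $T$ acts invertibly on $R_{\mathfrak{p}}$, one gets $T^{-1}(I^{n}R_{\mathfrak{p}}\cap R)=I^{n}R_{\mathfrak{p}}\cap T^{-1}R$; in particular $T^{-1}(I^{(n)})\subseteq(T^{-1}I)^{(n)}$, since each $a\in I^{(n)}$ already lies in every term defining the right-hand side. For the reverse inclusion I would invoke that $(T^{-1}I)^{(n)}$ is the smallest ideal $\mathfrak{a}$ of $T^{-1}R$ with $(T^{-1}I)^{n}\subseteq\mathfrak{a}$ and $\Ass(T^{-1}R/\mathfrak{a})\subseteq\Ass(T^{-1}R/T^{-1}I)$: indeed $T^{-1}(I^{(n)})\supseteq T^{-1}(I^{n})=(T^{-1}I)^{n}$, and $\Ass\bigl(T^{-1}R/T^{-1}(I^{(n)})\bigr)=\{\mathfrak{q}T^{-1}R:\mathfrak{q}\in\Ass(R/I^{(n)}),\ \mathfrak{q}\cap T=\emptyset\}\subseteq\Ass(T^{-1}R/T^{-1}I)$ because $\Ass(R/I^{(n)})\subseteq\Ass(R/I)$, so $(T^{-1}I)^{(n)}\subseteq T^{-1}(I^{(n)})$.

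The genuinely delicate point---and the one I expect to be the main obstacle---is that the terms $T^{-1}(I^{n}R_{\mathfrak{p}}\cap R)$ coming from the associated primes $\mathfrak{p}$ of $I$ that \emph{meet} $T$ are absent from the right-hand intersection, so one must be sure they do not shrink the left-hand one below $(T^{-1}I)^{(n)}$. In the scheme above this is absorbed into two standard properties of the $\Ass$-version of symbolic powers: that $I^{(n)}$ is the smallest ideal containing $I^{n}$ whose associated primes all lie in $\Ass(R/I)$, and that $\Ass(R/I^{(n)})\subseteq\Ass(R/I)$. Isolating these carefully---or, equivalently, directly checking that the terms indexed by primes meeting $T$ are redundant, which comes down to analyzing how associated primes of the powers $I^{n}$ sit below the associated primes of $I$ meeting $T$---is where the real work lies; the rest is formal bookkeeping with exactness of localization and with the behaviour of associated primes under localization, exactly as in the analogous statements for ordinary powers or for the $\Min$-version of symbolic powers.
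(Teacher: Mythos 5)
Your reduction of the lemma to the degreewise identity $T^{-1}(I^{(n)})=(T^{-1}I)^{(n)}$ is exactly the paper's route, and your handling of the routine parts (exactness of localization on the finite intersection, the description of $\Ass(T^{-1}R/T^{-1}I)$, and $T^{-1}(I^{n}R_{\mathfrak{p}}\cap R)=I^{n}R_{\mathfrak{p}}\cap T^{-1}R$ for $\mathfrak{p}\cap T=\emptyset$, giving the inclusion $T^{-1}(I^{(n)})\subseteq(T^{-1}I)^{(n)}$) is correct and more careful than the paper's one-line chain of equalities. But the reverse inclusion--the only nontrivial point, as you say yourself--is not proved: you reduce it to the claim $\Ass(R/I^{(n)})\subseteq\Ass(R/I)$ (the minimality half of your characterization of $(T^{-1}I)^{(n)}$ is fine, but membership of $T^{-1}(I^{(n)})$ in your family needs precisely this claim), and you defer its verification as ``where the real work lies.'' For the $\Ass$-version of symbolic powers used in Definition~\ref{defn:sym} that claim is false in general: writing $I^{n}=\bigcap_j Q_j$, one has $I^{(n)}=\bigcap\{Q_j:\sqrt{Q_j}\subseteq\mathfrak{p}\text{ for some }\mathfrak{p}\in\Ass(R/I)\}$, so $\Ass(R/I^{(n)})$ can pick up associated primes of $I^{n}$ lying strictly between a minimal prime and an embedded prime of $I$. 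Concretely, let $\mathfrak{p}\subset R_0=k[x,y,z]$ be the prime of the monomial curve $(t^3,t^4,t^5)$, for which classically $\mathfrak{m}=(x,y,z)\in\Ass(R_0/\mathfrak{p}^2)$, and in $R=R_0[w]$ set $I=\mathfrak{p}R\cap(\mathfrak{m},w)^N$ with $N\gg0$; then $\Ass(R/I)=\{\mathfrak{p}R,(\mathfrak{m},w)\}$ while $\mathfrak{m}R\in\Ass(R/I^{(2)})\setminus\Ass(R/I)$, and for $T=R\setminus\mathfrak{m}R$ one checks $T^{-1}(I^{(2)})=\mathfrak{p}^{2}R_{\mathfrak{m}R}\subsetneq\mathfrak{p}^{(2)}R_{\mathfrak{m}R}=(T^{-1}I)^{(2)}$. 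So the gap is genuine, not bookkeeping: your proposed route cannot be completed in the stated generality, because the terms indexed by embedded associated primes of $I$ meeting $T$ really can shrink the left-hand side.

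In fairness, the paper's own proof makes exactly the same leap at its second displayed equality, where the saturations at associated primes of $I$ meeting $T$ are silently dropped; that step (and both of your ``standard properties'') is correct when every such prime is minimal over $I$, since then $I^{n}R_{\mathfrak{p}}\cap R$ is $\mathfrak{p}$-primary and contains a power of an element of $T$, hence becomes the unit ideal in $T^{-1}R$. Thus the argument is sound whenever $\Ass(R/I)=\Min(R/I)$ (in particular if symbolic powers are defined via minimal primes), and in the only places the lemma is used (Lemma~\ref{locsra}, hence Theorem~\ref{uesra}) $R$ is a domain and either $T=R\setminus\{0\}$ or $T\cap I\neq\emptyset$, so $T^{-1}(I^{(n)})=T^{-1}R$ trivially and the delicate case never arises. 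To make your write-up (or the paper's) complete, either add the unmixedness/minimal-prime hypothesis or restrict to the multiplicative sets actually needed later.
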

\begin{proof}
By definition $\mathcal{R}_{s}(I)=R\oplus Ix\oplus I^{(2)}x^{2}\oplus \cdots$,
and since the localization commutes with direct sums, we have
\[\begin{split}
   T^{-1}\mathcal{R}_{s}(I) & = T^{-1}R\oplus T^{-1}(I)x\oplus T^{-1}(I^{(2)})x^{2}\oplus \cdots \\
    & = T^{-1}R\oplus T^{-1}(IR)x\oplus T^{-1}(I^{(2)}R)x^{2}\oplus \cdots.
\end{split}\]
Now for $n\geq 1$, 
\[\begin{split}
    T^{-1}(I^{(n)})&=T^{-1}(\bigcap\limits_{\Ass(R/I)}(I^{n}R_{\mathfrak{p}}\cap R))\\ &= \bigcap\limits_{\Ass(T^{-1}(R/I))}T^{-1}(I^{n}R_{\mathfrak{p}}\cap R)\\  & =\bigcap\limits_{\Ass(T^{-1}R/T^{-1}I))}((T^{-1}I)^{n}(T^{-1}R)_{\mathfrak{p}}\cap T^{-1}R)\\ & = (T^{-1}I)^{(n)}.
\end{split}\]

Since the localization commutes with direct sums, we also have
\[ T^{-1}\mathcal{R}_{s}(T^{-1}I)= T^{-1}R \oplus T^{-1}I(T^{-1}R)x\oplus (T^{-1}I(T^{-1}R))^{(2)}x^{2}\oplus \cdots.\]

But we already have $(T^{-1}I)^{(n)}=T^{-1}(I^{(n)})$, hence we conclude that $T^{-1}\mathcal{R}_{s}(I)=T^{-1}\mathcal{R}_{s}(T^{-1}I)$.
\end{proof}

The following lemma is a particular case of the previous one.  We use the following version later.
\begin{lem}
Let $R$ be a commutative Noetherian domain and $(0)\neq I$ an ideal of $R$. Then \begin{enumerate}
    \item If $T :=$ set of all non-zero-divisors of $R$, $T^{-1}\mathcal{R}_{s}(I)=T^{-1}R[x]$. 
    \item If $T=\{1,a,a^{2},\ldots\}$ with $a\in I$ is a non-zero-divisor of $R$, $T^{-1}\mathcal{R}_{s}(I)=R_{a}[x]$. 
\end{enumerate}\label{locsra}
\end{lem}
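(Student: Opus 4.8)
The plan is to deduce both statements directly from Lemma~\ref{local-commute}, by observing that each of the two multiplicative sets $T$ has the effect of collapsing $I$ to the unit ideal of the localized ring, and that the symbolic Rees algebra of a ring with respect to its unit ideal is just a polynomial ring.

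First, for (1): in a domain the non-zero-divisors are precisely the nonzero elements, so $T = R\setminus\{0\}$ and $T^{-1}R = K$, the field of fractions of $R$. Since $I\neq(0)$, any $0\neq a\in I$ becomes invertible in $K$, whence $T^{-1}I = IK = K$ is the unit ideal of $K$. For (2): $T^{-1}R = R_{a}$, and since $a\in I$ is a unit in $R_{a}$ we again get $T^{-1}I = IR_{a} = R_{a}$, the unit ideal.

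Next I would invoke Lemma~\ref{local-commute} (its hypotheses hold: $R$ is Noetherian, and $K$, $R_{a}$ are Noetherian, being a field and a localization of a Noetherian ring respectively) to get $T^{-1}\mathcal{R}_{s}(I) = \mathcal{R}_{s}(T^{-1}I)$, the symbolic Rees algebra of $T^{-1}R$ with respect to $T^{-1}I$. It then remains to note that for a Noetherian ring $S$ one has $\mathcal{R}_{s}(S) = S[x]$: indeed $S/S = 0$, so $\Ass(S/S) = \emptyset$, and the symbolic power $S^{(n)}$ is the intersection over the empty index set inside $S$, that is $S$ itself, for every $n\geq 1$; hence $\mathcal{R}_{s}(S) = \bigoplus_{n\geq 0} Sx^{n} = S[x]$. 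Applying this with $S = K$ gives $T^{-1}\mathcal{R}_{s}(I) = K[x] = T^{-1}R[x]$ in case (1), and with $S = R_{a}$ gives $T^{-1}\mathcal{R}_{s}(I) = R_{a}[x]$ in case (2).

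There is no real obstacle here; the argument is essentially bookkeeping on top of Lemma~\ref{local-commute}. The only point that deserves a word of care is the degenerate behaviour of the symbolic power when the ideal equals the whole ring — one must read an empty intersection as the ambient ring — and this is consistent with the convention $I^{(1)} = I$ recorded earlier, so it causes no difficulty.
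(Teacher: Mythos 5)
Your argument is correct, but it is organized differently from the paper's. You apply Lemma~\ref{local-commute} uniformly in both cases, reduce to the symbolic Rees algebra of the \emph{unit} ideal of $T^{-1}R$ (namely $K$ resp.\ $R_a$), and then observe that $\Ass(S/S)=\emptyset$ forces $S^{(n)}=S$ by the empty-intersection convention, so that algebra is $S[x]$. The paper instead argues part (1) directly from the graded decomposition in the proof of Lemma~\ref{local-commute}, using only that $T\cap I^{(n)}\neq\emptyset$ makes each graded piece $T^{-1}(I^{(n)})$ equal to $T^{-1}R$; and for part (2) it avoids Lemma~\ref{local-commute} altogether, sandwiching $R[Ix]\subseteq \mathcal{R}_{s}(I)\subseteq R[x]$, localizing, and quoting the analogous statement for ordinary Rees algebras (\cite[Lemma 3.1]{RS19}) with $T\cap I\neq\emptyset$ to squeeze $T^{-1}\mathcal{R}_{s}(I)=R_a[x]$. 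Your route is more uniform and avoids the external citation, at the cost of leaning on the degenerate convention for symbolic powers of the unit ideal; this is legitimate and is in fact the same convention the paper uses implicitly when it disposes of the case $I=(1)$ in the proof of Theorem~\ref{uesra} (and it is consistent with $I^{(1)}=I$). If you wanted to avoid the convention entirely in case (1), you could note, as the paper does, that $T\cap I^{(n)}\neq\emptyset$ already gives $T^{-1}(I^{(n)})=T^{-1}R$ without passing to symbolic powers over $K$.
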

\begin{proof}
$(1)$ Since $R$ is a domain, $T=R^{*}=R \backslash\{0\}$. For $n\geq 1$, $I^{(n)}$ is an ideal of $R$ and $T\cap I^{(n)}\neq\emptyset$. This implies $T^{-1}(I^{(n)})=T^{-1}R$. Hence by 1st part of the proof of Lemma \ref{local-commute}, we get $T^{-1}\mathcal{R}_{s}(I)=T^{-1}R[x]$.\par 
$(2)$ By \cite[Lemma 3.1]{RS19}, we have $T^{-1}R[Ix]=T^{-1}R[(T^{-1}I)x]$. Since $T\cap I\neq\emptyset$ this implies $T^{-1}R[Ix]=T^{-1}R[x]$. Consider \[R[Ix]\subseteq \mathcal{R}_{s}(I)\subseteq R[x].\]
After localize at $T$, we have \[T^{-1}R[Ix]\subseteq T^{-1}\mathcal{R}_{s}(I)\subseteq T^{-1}R[x].\] 
 Therefore $T^{-1}\mathcal{R}_{s}(I)=R_{a}[x]$.
\end{proof}

\begin{lem}\label{3.3}
Let $R$ be a commutative domain of  dimension $d$ and $A=R_{1+aR}$, where $a$ is a non-zero non-unit element of $R$. Let $P$ be a projective $R$-module of rank $\geq d$. Let $Q:=P_{1+aR}$. Then $Q$ has a unimodular element. 
\end{lem}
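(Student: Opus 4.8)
The statement to prove is the following: with $R$ a commutative domain of dimension $d$, $a \in R$ a nonzero non-unit, $A = R_{1+aR}$, and $P$ a projective $R$-module of rank $\geq d$, the localized module $Q = P_{1+aR}$ has a unimodular element.

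Let me sketch the approach. The key point is that $A = R_{1+aR}$ is a ring in which the image of $a$ lies in the Jacobson radical: indeed every maximal ideal of $A$ corresponds to a prime $\mathfrak{p}$ of $R$ with $\mathfrak{p} \cap (1+aR) = \emptyset$, and for such $\mathfrak{p}$ one has $a \in \mathfrak{p}$ (otherwise $1 + a r$ would be a unit locally forcing... more precisely, if $a \notin \mathfrak{p}$ then $\mathfrak{p}$ survives in $R_a$, but any maximal ideal of $A$ must contain the radical...). More cleanly: if $a \notin \mathfrak{p}$ then $1 \in \mathfrak{p} + aR$, so $\mathfrak{p}$ meets $1 + aR$, contradiction. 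Hence $a$ is in every maximal ideal of $A$, i.e. $a \in \operatorname{rad}(A)$. Now invert $a$: the ring $A_a = R_a$ (localizing $R_{1+aR}$ further at $a$ gives $R_a$, since together $1+aR$ and powers of $a$ generate everything), and here is the crucial dimension drop — $\dim R_a \leq d - 1$. Wait, that need not hold for a general domain; $\dim R_a \leq \dim R = d$ only. Let me reconsider: the right statement is $\dim A/aA \leq d-1$, which does hold since $aA \neq 0$ and $A$ is a domain (if $a$ were in a minimal prime it would be $0$).

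So the plan is a standard patching argument across the cover $\{D(a), V(a)\}$ of $\Spec A$. First I would establish $\dim A_a = \dim R_a$ and, more importantly, $\dim A/aA \leq d - 1$ using that $A$ is a domain and $a \neq 0$ is a non-unit, so $aA$ is a nonzero proper ideal and any chain of primes through $V(a)$ avoids the minimal prime $(0)$, giving the strict inequality $\dim A/aA \leq \dim A - 1 \leq d-1$. Next, over $A/aA$ the module $Q/aQ$ is projective of rank $\geq d > \dim(A/aA)$, so by Serre's theorem (Serre \cite{Serre58}) it has a unimodular element; lift this to an element $q \in Q$ whose image in $Q/aQ$ is unimodular, equivalently $O_Q(q) + aA = A$. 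Then localize at $a$: over $A_a = R_a$, again $Q_a$ is projective of rank $\geq d \geq \dim R_a$; here one needs that $Q_a$ has a unimodular element — but actually the cleaner route is to directly invoke the non-Noetherian transitivity/patching theorem (Heitmann \cite{Heit84}, or the Bass–Plumstead style patching) which says: if an element becomes unimodular after inverting $a$ and also after going mod $a$ (with $a$ in the radical), then it can be modified to a genuine unimodular element. The hypothesis that $a \in \operatorname{rad}(A)$ is exactly what powers this gluing.

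The main obstacle I anticipate is handling the non-Noetherian nature of $A$ (since symbolic Rees algebras, the intended application, need not be Noetherian): the classical Plumstead/Bass patching results are stated for Noetherian rings, so one must appeal to Heitmann's non-Noetherian refinement — as the introduction of the paper explicitly flags. The secondary technical point is making sure the two "local" solutions on $D(a)$ and $V(a)$ can be chosen compatibly: concretely, after finding $q$ with $O_Q(q) + aA = A$, one works in $Q_a$ over $R_a$ to adjust $q$ by an element of $aQ$ so that it becomes unimodular over $A_a$, then uses $a \in \operatorname{rad}(A)$ to conclude the adjusted element is unimodular over $A$ itself. I would present this as: (i) dimension bookkeeping for $A/aA$ and $A_a$; (ii) apply Serre mod $a$ to get $q$ with $O_Q(q)+aA=A$; (iii) apply the relevant unimodular-element result over $A_a=R_a$; (iv) patch via Heitmann's theorem using $a \in \operatorname{rad}(A)$.
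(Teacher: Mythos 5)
Your skeleton---reduce modulo $a$, note the dimension drop because the ring is a domain and $a\neq 0$ is a non-unit, find a unimodular element of the reduced module, lift it, and exploit that $a$ lies in the Jacobson radical of $A=R_{1+aR}$---is essentially the paper's argument. But there is a genuine gap at the one citation that carries the weight: you apply Serre's theorem \cite{Serre58} over $A/aA$, and Serre's theorem requires a Noetherian ring. The lemma deliberately makes no Noetherian hypothesis on $R$, and in its application inside Theorem~\ref{uesra} it is invoked with $R$ equal to the symbolic Rees algebra $\mathcal{R}_{s}(I)$, which need not be Noetherian---this is precisely why the lemma exists. The paper's proof replaces Serre by Heitmann's non-Noetherian splitting result \cite[Corollary 2.6]{Heit84}, applied to $\overline{P}=P/aP$ over $\overline{R}=R/aR$, where $\dim\overline{R}<d\leq\rank P$ because $(0)$ is the unique minimal prime of the domain $R$. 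You do flag non-Noetherianity as the main obstacle, but you attach Heitmann to a ``patching'' step rather than to the splitting step, leaving Serre in place at the only point where Noetherianity would actually be used; as written, that step fails.

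Second, the $D(a)$-half of your plan is unnecessary and partly incorrect. Once you have a lift $q$ with $O_{Q}(q)+aA=A$, the fact that every element of $1+aA$ is a unit of $A$ (i.e.\ $a\in\operatorname{rad}(A)$) already forces $O_{Q}(q)=A$: an ideal comaximal with $aA$ lies in no maximal ideal, since all maximal ideals of $A$ contain $a$. No adjustment over $A_{a}$ and no gluing theorem is needed, and the inference you gesture at---unimodular over $A_{a}$ plus $a\in\operatorname{rad}(A)$ implies unimodular over $A$---is not valid as stated; it is the mod-$a$ unimodularity that does the work. Note also that $A_{a}=R_{a}$ is false in general (for $R=\Z$, $a=2$ one gets $A_{a}=\Q$ while $R_{a}=\Z[1/2]$), and Heitmann's paper provides a Serre-type splitting statement, not the patching statement you describe. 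The paper's proof is exactly the short route: reduce modulo $aR$, apply \cite[Corollary 2.6]{Heit84} to get $\overline{p}\in\Um(\overline{P})$, lift to $p\in P$, and observe that $O_{P}(p)$ meets $1+aR$, which consists of units after localization, so $p/1$ is unimodular in $Q$.
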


\begin{proof}
 Let `overbar' denote the going modulo the ideal $aR$. Since $(0)$ is the minimum prime ideal in the domain $R$, we observe that $dim(\overline{R})<d$. By \cite[Corollary 2.6]{Heit84},
$\overline{P}$ has a unimodular element. Therefore there exists $\overline{p}$ such that the order ideal  $O_{\overline{P}}(\overline{p})=\overline{R}$.
Let $p\in P$ be a lift of $\overline{p}$. Now observe that $1+aR \subset $ of the order ideal $O_{P}(p)$. Hence $p_{1+aR}$ is a unimodular element of $Q$. This finishes the proof.
\end{proof}

The following theorem is a result of Ravi A. Rao \cite[Theorem 5.1(I)]{Rao82}.
\begin{thm}
Let $R$ be a commutative Noetherian ring of dimension $d$, and $S$ be a multiplicative closed set of non-zero-divisors of $R[x]$. Let $A$ be a ring lying between $R[x]$ and $S^{-1}R[x]$. Then for $n\geq d+2$, E$_{n}(A)$ acts transitively on Um$(A^{n})$.\label{5.1}
\end{thm}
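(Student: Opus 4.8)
The plan is first to reduce to a finitely generated Noetherian ring between $R[x]$ and $S^{-1}R[x]$, and then to equip its spectrum with a generalized dimension function of value $\le d$ and invoke the transitivity theorem of Eisenbud--Evans and Plumstead. The delicate point is the bound $d$: since the Krull dimension of $A$ can be $d+1$, Bass's stability theorem would only give transitivity for $n\ge d+3$, so one needs the extra gain of one dimension provided by Plumstead's treatment of the polynomial variable.

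For the reduction, take $v=(a_{0},\dots,a_{n-1})\in\Um(A^{n})$ and $b_{0},\dots,b_{n-1}\in A$ with $\sum_{i}a_{i}b_{i}=1$. Each $a_{i},b_{i}$ is an $R[x]$-polynomial in finitely many elements of $S^{-1}R[x]$, so there is a finitely generated $R[x]$-subalgebra $A_{0}$ with $R[x]\subseteq A_{0}\subseteq S^{-1}R[x]$ and $v\in\Um(A_{0}^{n})$; as any elementary transformation joining $v$ to $e_{1}=(1,0,\dots,0)$ again lives in such a subalgebra, it suffices to treat $A_{0}$. Now $A_{0}$ is Noetherian, and if its algebra generators over $R[x]$ are $f_{1}/s_{1},\dots,f_{k}/s_{k}$ with $f_{j}\in R[x]$ and $s_{j}\in S$, then for $s=s_{1}\cdots s_{k}\in S$ we have $(A_{0})_{s}=R[x]_{s}$, while $s$ is a non-zero-divisor of $A_{0}$ because it is a unit of $S^{-1}R[x]$. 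Hence we may assume $A$ is such an $A_{0}$.

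Next I would build a generalized dimension function $\delta\colon\Spec A\to\N$ with $\delta(\Spec A)\le d$. On the open set $D(s)\subseteq\Spec A$ we have $D(s)\cong\Spec R[x]_{s}$, an open subscheme of $\Spec R[x]$, so we may restrict Plumstead's generalized dimension function on $\Spec R[x]$; this has value $\le\dim R=d$ although $\dim R[x]=d+1$, the ``fibre-closed'' primes being measured through the generic points of the one-dimensional fibres of $\Spec R[x]\to\Spec R$, and restriction to an open subscheme does not increase its value. On the complementary closed set $V(sA)=\Spec(A/sA)$, the ring $A/sA$ is a finitely generated $R$-algebra of Krull dimension $\le d$ --- this is where the hypotheses that $s$ is a non-zero-divisor and that $A_{s}=R[x]_{s}$ are used, to kill the contribution of the $x$-direction --- so $V(sA)$ carries its Krull-dimension function, of value $\le d$. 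Gluing these two along $V(sA)$ (a standard operation on generalized dimension functions, after normalizing so that they agree on the boundary) produces $\delta$ with $\delta(\Spec A)\le d$. Finally, by the transitivity theorem of Eisenbud--Evans and Plumstead, for a Noetherian ring carrying a generalized dimension function of value $\le d$ the group $\El_{m}(A)$ acts transitively on $\Um(A^{m})$ for $m\ge d+2$; applied here, this shows $v$ is $\El_{n}(A)$-equivalent to $e_{1}$ for $n\ge d+2$.

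The hard part is the middle step: constructing the generalized dimension function with the \emph{sharp} value $d$ rather than $d+1$. This requires handling the polynomial direction over $D(s)$, where Plumstead's trick saves a dimension, together with the possibly non-reduced closed fibre-locus $V(sA)$, and checking that the two pieces patch to a bona fide generalized dimension function in Plumstead's sense; controlling $\dim(A/sA)$ in the general Noetherian setting is the technical heart. Granting this, the reduction to finite generation and the final application of the transitivity theorem are routine.
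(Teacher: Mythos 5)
This statement is not proved in the paper at all: it is quoted verbatim as Theorem 5.1(I) of Rao \cite{Rao82}, so there is no internal argument to compare yours with; I can only judge your sketch on its own and against the known Plumstead-style technology. Your overall route --- reduce to a finitely generated (hence Noetherian) intermediate algebra $A_{0}$ with $(A_{0})_{s}=R[x]_{s}$ for some $s\in S$, build a generalized dimension function of value $\le d$ on $\Spec A_{0}$ by combining Plumstead's function on the subset $D(s)$ with the Krull-dimension function on $V(sA_{0})$, and then apply the Eisenbud--Evans--Plumstead transitivity theorem for $n\ge d+2$ --- is exactly the standard way such overring stability results are obtained (\cite{Plum83}, and the same two-cover scheme as in \cite{RS19}), and the reduction step and the final application are indeed routine.

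The substantive issue is the step you yourself flag and leave unproved: $\dim(A_{0}/sA_{0})\le d$. As written this is an assertion, and it is genuinely where the theorem lives, since $\dim A_{0}$ can be $d+1$ and nothing formal forces the closed piece to lose a dimension. It is, however, true and can be filled with precisely the valuation-dimension tools the paper assembles in Section 2: if $q\supseteq sA_{0}$ is prime and $P\subseteq q$ is a minimal prime of $A_{0}$, then $s\notin P$ (it is a non-zero-divisor of $A_{0}$, being a unit of $S^{-1}R[x]$), $P$ contracts to a minimal prime $p_{0}[x]$ of $R[x]$, and $(R/p_{0})[x]\subseteq A_{0}/P\subseteq \mathrm{Frac}((R/p_{0})[x])$; since finite-dimensional Noetherian domains are Jaffard, every such overring has dimension $\le d+1$, and then $\dim(A_{0}/q)+\hh(q/P)\le\dim(A_{0}/P)$ with $\hh(q/P)\ge 1$ gives $\dim(A_{0}/q)\le d$. (This also handles the non-domain case of $R$, which your sketch passes over.) One further point that should not be waved through: a generalized dimension function does not restrict to an arbitrary subset for free, so ``restrict Plumstead's function on $\Spec R[x]$ to $D(s)$'' needs the usual verification that the pullback along $\Spec (A_{0})_{s}=\Spec R[x]_{s}$ satisfies the finiteness condition on the subset $\{q: s\notin q\}$ of $\Spec A_{0}$, before invoking the patching lemma. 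With those two items supplied, your outline does yield the theorem; as it stands it is a correct strategy whose technical heart is acknowledged but not carried out.
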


\begin{thm}
Let $R$ be a commutative Noetherian domain of dimension $d$ and $I$ an ideal of $R$. Let $A= \mathcal{R}_{s}(I)$ or $\mathcal{R}_{s}(I,x^{-1})$ and $P$ a finitely generated projective $A$-module of $\rank \geq d+1$.
Then $P$ has a unimodular element. \label{uesra}
\end{thm}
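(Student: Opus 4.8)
The plan is to run a Quillen-type patching argument with respect to a well-chosen non-zero non-unit $s\in A$: on the patch $\Spec A_s$ I would produce a unimodular element of $P_s$ from Plumstead's (resp.\ Mandal's) theorem, on the complementary patch $\Spec A_{1+sA}$ from Lemma~\ref{3.3} (i.e.\ from Heitmann's theorem, which is precisely the ingredient allowing $A$ to be non-Noetherian), and then glue the two using Rao's transitivity theorem, Theorem~\ref{5.1}. Before this I would dispose of the degenerate cases: if $I=(0)$ then $\mathcal{R}_s(I)=R$ and $\rank P\ge d+1>\dim R$, so Serre's theorem \cite{Serre58} applies, and $\mathcal{R}_s((0),x^{-1})\cong R[x]$ falls to Plumstead \cite{Plum83}; if $I=R$ then $\mathcal{R}_s(I)=R[x]$ and $\mathcal{R}_s(I,x^{-1})=R[x,x^{-1}]$, handled directly by Plumstead \cite{Plum83}, resp.\ Mandal \cite{Man82}. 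So I may assume $(0)\ne I\subsetneq R$; then $I$ contains no unit, so any $0\ne a\in I$ is automatically a non-zero non-unit of $R$, hence of $A$, and I set $s=a$.

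On the patch $A_s$: by Lemma~\ref{locsra}(2) and its evident analogue for the extended algebra, $A_s$ equals $R_a[x]$, resp.\ $R_a[x,x^{-1}]$, a polynomial, resp.\ Laurent polynomial, ring over the Noetherian domain $R_a$ of dimension $\le d$; since $\rank P_s\ge d+1$, Plumstead \cite{Plum83}, resp.\ Mandal \cite{Man82}, then gives $\tau_1\in\Um(P_s)$. On the patch $A_{1+sA}$: by Lemma~\ref{dimsra}, resp.\ Lemma~\ref{dimesra}, $\dim A=d+1$, so Lemma~\ref{3.3} applied to the (possibly non-Noetherian) domain $A$ and the non-zero non-unit $s$ gives $\tau_2\in\Um(P_{1+sA})$. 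Every prime of $A$ containing $s$ is disjoint from $1+sA$, so $\Spec A=\Spec A_s\cup\Spec A_{1+sA}$; moreover $s$ together with any $t\in1+sA$ generates the unit ideal, so the square
\[
\begin{array}{ccc}
A & \longrightarrow & A_s\\
\downarrow & & \downarrow\\
A_{1+sA} & \longrightarrow & A_{s(1+sA)}
\end{array}
\]
is a pullback of rings along which finitely generated projective modules, and their unimodular elements, patch. It therefore suffices to modify $\tau_1$ by an automorphism of $P_s$ and $\tau_2$ by an automorphism of $P_{1+sA}$ so that their images in $P_{s(1+sA)}$ coincide; these then glue to a unimodular element of $P$.

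For this adjustment I would invoke Theorem~\ref{5.1}. Write $B=A_{s(1+sA)}$. Since $A\subseteq R[x]$ (resp.\ $A\subseteq R[x,x^{-1}]=R[x]_x$) and $A_s$ is a localization of $R[x]$ at a multiplicative set of non-zero-divisors---here using that $R$, hence $R[x]$, is a domain---the ring $B$ is again a localization of $R[x]$ at a multiplicative set $S$ of non-zero-divisors, hence lies between $R[x]$ and $S^{-1}R[x]$, with $\dim R=d$. Thus Theorem~\ref{5.1} gives that $E_n(B)$ acts transitively on $\Um(B^n)$ for $n\ge d+2$. Transporting this to the projective $B$-module $P\otimes_A B$ of rank $\ge d+1$ by the standard local--global argument for the elementary automorphism group (the relevant rank, that of $(P\otimes_A B)\oplus B$, being $\ge d+2$), I obtain an elementary automorphism over $B$ carrying the image of one local unimodular element to that of the other; splitting it in the Quillen--Suslin manner into a product of automorphisms defined over $A_s$ and over $A_{1+sA}$ then produces the required compatible pair.

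The hard part will be this last step: transferring the transitivity of Theorem~\ref{5.1} from free modules to the projective module $P\otimes_A B$, and then carrying out the Quillen--Suslin factorization of the gluing automorphism. Note that the hypothesis $\rank P\ge d+1$ is exactly what makes the value $d+2$ available after stabilizing by $P\oplus A$. The remaining points---identifying $A_s$, checking that the displayed square is a patching square, and the extended-algebra analogues of Lemmas~\ref{locsra} and~\ref{dimsra}---should be routine.
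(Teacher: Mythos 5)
Your outline has the same skeleton as the paper's proof (the two patches $A_s$ and $A_{1+sA}$, Heitmann via Lemma~\ref{3.3} on the second patch, Rao's Theorem~\ref{5.1} plus a Roy--type splitting to glue, and the same degenerate cases), but the gluing corner, which you yourself flag as "the hard part," contains two genuine gaps. First, you never arrange for the module at the overlap to be free. Theorem~\ref{5.1} is a statement about $\El_n(B)$ acting on $\Um(B^n)$, i.e.\ about free modules, and there is no "standard local--global argument" that transfers it to an arbitrary projective $B$-module $P\otimes_A B$; that transfer is precisely what is missing. The paper sidesteps this by choosing the patching element more carefully: it first inverts $T=R\setminus\{0\}$, so that $T^{-1}A=K[x]$ is a PID (Lemma~\ref{locsra}(1)) and $T^{-1}P$ is free, picks $t\in T$ with $P_t$ free, and sets $b=at$ with $0\neq a\in I$; then simultaneously $A_b\cong R_b[x]$ and $P_b$ is free, so the images of the two local unimodular elements lie in $B^m$ and Theorem~\ref{5.1} applies verbatim.

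Second, your dimension bookkeeping is off by one, and the proposed stabilization does not repair it. Viewing $B=A_{s(1+sA)}$ as lying between $R[x]$ and $S^{-1}R[x]$ with $\dim R=d$ forces the bound $n\geq d+2$ in Theorem~\ref{5.1}, while the rank may be exactly $d+1$. Working in $(P\otimes_A B)\oplus B$ does not help: an elementary automorphism of the stabilized module carrying $(\bar u,0)$ to $(\bar v,0)$, after splitting and gluing, only produces a unimodular element of $P\oplus A$, which holds trivially and says nothing about $P$; descending from $\Um(P\oplus A)$ to $\Um(P)$ is a cancellation-type problem over $A$ essentially as hard as the theorem you are proving. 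The paper instead rewrites $B=D^{-1}R'[x]$ with $R'=R_{b(1+bR)}$: since $b$ lies in the Jacobson radical of $R_{1+bR}$, one has $\dim R'\leq d-1$, so Rao's bound becomes $n\geq d+1$, matching the rank hypothesis exactly. (One could imagine replacing Theorem~\ref{5.1} by a Bass-type transitivity of transvections on $\Um(P\otimes_A B)$ over the Noetherian ring $B$ of dimension $\leq d$, but then you would still owe a splitting result for those automorphisms over the patching square; the paper's route via freeness and the ring $R'$ is what actually closes the argument, and your write-up as it stands does not.)
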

\begin{proof}
First, we assume that $A=\mathcal{R}_{s}(I)$. Let $\rank(P)=m>d$. Since $A$ is a subring of an integral domain $R[x]$,  $A$ is an integral domain. If $I=(0)$, then $I^{(n)}=(0)$ this implies $A=R$.  In this case, the theorem follows from Serre \cite{Serre58}. If $I=(1)$, then $I^{(n)}=(1)$ this implies $A=R[x]$. In this case, the theorem follows from Plumstead \cite[Corollary 4]{Plum83}. So we can  assume $I\neq (0)$ and $I\neq (1)$. By Lemma \ref{dimsra}, $\dim \mathcal{R}_{s}(I)=d+1$. 


Let $T$ be the set of all non-zero-divisors of $R$. By Lemma \ref{locsra}(1), $T^{-1}A=T^{-1}R[x]$, where $T^{-1}R$ is a quotient field of $R$. Since $T^{-1}R[x]$ is a PID, $T^{-1}P$ is a free module over $T^{-1}A$. Since $P$ is finitely generated, there exists $t\in T$ such that $P_{t}$ is a free $\mathcal{R}_{s}(I)_{t}$-module.  Let $0\neq a$ be a non-unit element of $I$. Then $P_{at}$ is a free $\mathcal{R}_{s}(I)_{at}$-module. \par
 Denote $b=at$. By Lemma \ref{locsra}(2), we have $\mathcal{R}_{s}(I)_{b}\cong R_{b}[x]$. Now consider the following Cartesian square of rings \begin{center}
    \begin{tikzcd}
    \mathcal{R}_{s}(I) \ar[r,"i_{1}"] \ar[d,"i_{2}"] & \mathcal{R}_{s}(I)_{b}\cong R_{b}[x] \ar[d,"j_{1}"]\\ \mathcal{R}_{s}(I)_{1+b\mathcal{R}_{s}(I)} \ar[r,"j_{2}"] & R_{b}[x]_{1+b\mathcal{R}_{s}(I)}.
    \end{tikzcd}
\end{center}
 Since $P_{b}$ is a free $A_{b}$-module of rank $m$, $P_{b}\cong A_{b}^{m}$. Hence $P_{b}$ has a unimodular element say $p_{1}$. Since $P_{1+bA}$ is projective $A_{1+bA}$-module, by Lemma \ref{3.3}(1),  $P_{1+bA}$ has a unimodular element say $p_{2}$. Hence we have $P_{1+bA}\cong p_{2}A_{1+bA}\oplus Q$, with projective  $A_{1+bA}$-module $Q$. \par
Now consider $B=A_{b(1+bA)}=(1+bA)^{-1}\mathcal{R}_{s}(I)_{b}=(1+bR)^{-1}D^{-1}R_{b}[x]=D^{-1}R^{'}[x]$, where $D$ is a multiplicative closed subset of $R^{'}[x]$ and $R^{'}=R_{b(1+bR)}$ is of dimension $d-1$. Since $P_{b(1+bA)}$ is a free $B$-module of rank $m$, $P_{b(1+bA)}\cong B^{m}$. Let $\Bar{u}$ and $\Bar{v}$ be the image of $p_{1}$ and $p_{2}$ respectively in $P_{b(1+bA)}$. Now consider the following cartesian square of projective modules.
\begin{center}
    \begin{tikzcd}
    P \ar[r,"i_{1}"] \ar[d,"i_{2}"] & P_{b} \ar[d,"j_{1}"]\\ P_{1+bA} \ar[r,"j_{2}"] & P_{b(1+bA)}\cong B^{m}
    \end{tikzcd}
\end{center}
Note that the ring $B$ lying between $R^{'}[x]$ and $S^{-1}R^{'}[x]$, where $S$ is a set of all non-zero-divisors in $R^{'}[x]$. Then by Theorem $\ref{5.1}$, there exists $\sigma \in$ E$_{m}(B)$ such that $\sigma(\Bar{u})=\Bar{v}$. By \cite[Corollary 3.2]{Roy82} (see also \cite[Proposition 3.2]{BLR85}), we can split $\sigma$ as $(\sigma_{1})_{b}(\sigma_{2})_{1+bA}$, where $\sigma_{2}\in$ E$(P_{b})$ and $\sigma_{1}\in$ E$(P_{1+bA})$. Since $(\sigma_{1})_{b}(\sigma_{2})_{1+bA}(\Bar{u})=\Bar{v}$, for suitably changing $p_{1}$ and $p_{2}$, we can assume that $\Bar{u}=\Bar{v}$. 

Now consider the following fiber product diagram of projective modules.
\begin{center}
\begin{tikzcd}[row sep=scriptsize, column sep=scriptsize]
& P \arrow[dl,twoheadrightarrow, "\psi"] \arrow[rr] \arrow[dd] & & P_{b}\cong A_{b}^{m} \arrow[dl, twoheadrightarrow, "\psi_{1}"] \arrow[dd] \\
A \arrow[rr, crossing over] \arrow[dd] & & A_{b} \\
& P_{1+bA}\cong A_{1+bA}\oplus Q \arrow[dl,twoheadrightarrow,"\psi_{2}"] \arrow[rr] & & P_{b(1+bA)} \arrow[dl] \\
A_{1+bA} \arrow[rr] & & A_{b(1+bA)} \arrow[from=uu, crossing over]\\
\end{tikzcd}
\end{center}
Since $\psi_{1}$ and $\psi_{2}$ are  surjective homomorphisms, from the above diagram, we get a surjective homomorphism $\psi : P \twoheadrightarrow A$. Hence there is $p\in P$ such that $\psi(p)=1$. Therefore $P$ has a unimodular element. \par
When $A=\mathcal{R}_{s}(I,x^{-1})$, the proof is a verbatim copy of the above. However one needs to use \cite[Theorem 2.1]{Man82} in the second corner. 
A diligent reader can work out the details.
\end{proof}

\textbf{Acknowledgement:} H.P. Sarwar would like to thank S.E.R.B. Govt. of India for the grant SRG/2020/000272.

\end{document}